\DeclareMathOperator{\Fil}{Fil}
\newtheorem{theorem}{Theorem}[section]
\newtheorem{definition}[theorem]{Definition}
\newtheorem{lemma}[theorem]{Lemma}
\newtheorem{proposition}[theorem]{Proposition}
\newtheorem{remark}[theorem]{Remark}
\newtheorem{example}[theorem]{Example}
\newtheorem{corollary}[theorem]{Corollary}
\title{Monotone and cone preserving mappings on posets}
\author{Ivan~Chajda and Helmut~L\"anger}
\date{}
\begin{document}

\footnotetext{Support of the research by the Austrian Science Fund (FWF), project I~4579-N, and the Czech Science Foundation (GA\v CR), project 20-09869L, entitled ``The many facets of orthomodularity'', as well as by \"OAD, project CZ~02/2019, entitled ``Function algebras and ordered structures related to logic and data fusion'', and, concerning the first author, by IGA, project P\v rF~2020~014, is gratefully acknowledged.}

\maketitle

\begin{abstract}
We define several sorts of mappings on a poset like monotone, strictly monotone, upper cone preserving and variants of these. Our aim is to characterize posets in which some of these mappings coincide. We define special mappings determined by two elements and investigate when these are strictly monotone or upper cone preserving. If the considered poset is a semilattice then its monotone mappings coincide with semilattice homomorphisms if and only if the poset is a chain. Similarly, we study posets which need not be semilattices but whose upper cones have a minimal element. We extend this investigation to posets that are direct products of chains or an ordinal sum of an antichain and a finite chain. We characterize equivalence relations induced by strongly monotone mappings and show that the quotient set of a poset by such an equivalence relation is a poset again.
\end{abstract}

{\bf AMS Subject Classification:} 06A11, 06A06, 06A12

{\bf Keywords:} Poset, directed poset, semilattice, chain, monotone, strictly monotone, upper cone preserving, strictly upper cone preserving, strongly upper cone preserving, ordinal sum, induced equivalence relation

\section{Introduction}

Partially ordered sets, shortly posets, are relational structures which occur frequently both in various areas of mathematics and in applications. Posets were studied from numerous points of view depending on their application. One possible approach is to consider various mappings on a given poset and check when they coincide. Examples of such mappings are monotone mappings, cone preserving mappings, filter preserving mappings, etc. If the poset in question is of a particular form, e.g.\ if it is a semilattice or lattice, we can consider also homomorphisms. If a poset is directed then it can be converted into a so-called directoid, i.e.\ a groupoid with one binary operation. Homomorphisms of such directed posets were already investigated by the first author in \cite C. For a bit more general relational structures, so-called quasiordered sets, cone preserving mappings were studied in \cite{CH}. Homomorphisms of semilattices were investigated by L.~R.~Berrone in \cite B.

Based on the mentioned results, we introduce a list of interesting mappings on posets and find out how the fact that some of the mappings from this list coincide or satisfy some special assumptions influences the structure of the poset.

We do not consider the research on this topic to be finished. We rather consider our paper as a starting point which could inspire other authors to go on in this direction. We are convinced that the algebraic theory of posets is of a fundamental importance in the whole of mathematics.

\section{Elementary concepts and results}

Concerning the concepts used here, numerous of them are familiarly known and the remaining ones are introduced or recalled below.

Let $\mathbf P:=(P,\leq)$ be a poset, $A,B\subseteq P$ and $a,b\in P$. Then $A\leq B$ should mean $x\leq y$ for all $(x,y)\in A\times B$. Instead of $A\leq\{b\}$, $\{a\}\leq B$ and $\{a\}\leq\{b\}$ we simply write $A\leq b$, $a\leq B$ and $a\leq b$, respectively. The sets
\begin{align*}
L(A) & :=\{x\in P\mid x\leq A\}, \\
U(A) & :=\{x\in P\mid A\leq x\}
\end{align*}
are called the {\em lower} and {\em upper cone} of $A$, respectively. Instead of $L(A\cup B)$, $L(A\cup\{b\})$, $L(\{a\}\cup B)$, $L(\{a,b\})$ and $L(\{a\})$ we simply write $L(A,B)$, $L(A,b)$, $L(a,B)$, $L(a,b)$ and $L(a)$, respectively. In a similar way we proceed for $U$ and in analogous cases. Moreover, put $L^*(a):=(L(a))\setminus\{a\}$ and $U^*(a):=(U(a))\setminus\{a\}$. $\mathbf P$ is called {\em up-directed} if $U(x,y)\neq\emptyset$ for all $x,y\in P$. The subset $A$ of $P$ is called a {\em filter} of $\mathbf P$ if $x\in A$ and $x\leq y$ imply $y\in A$. Let $\Fil\mathbf P$ denote the set of all filters of $\mathbf P$. For each $a\in P$, the set $[a):=\{x\in P\mid a\leq x  \}$ is a filter of $\mathbf P$, the so-called {\em principal filter} generated by $a$.
 
\begin{remark}
If $\mathbf P$ is a poset then $(\Fil\mathbf P,\subseteq)$ is a complete lattice with smallest element $\emptyset$ and greatest element $P$ and
\begin{align*}
  \bigvee_{i\in I}F_i & =\bigcup_{i\in I}F_i, \\
\bigwedge_{i\in I}F_i & =\bigcap_{i\in I}F_i
\end{align*}
for every family $F_i,i\in I,$ of filters of $\mathbf P$.
\end{remark}

A mapping $f:P\rightarrow P$ is called
\begin{enumerate}[(i)]
\item {\em monotone} if $x\leq y$ implies $f(x)\leq f(y)$,
\item {\em strictly monotone} if $x<y$ implies $f(x)<f(y)$,
\item {\em upper cone preserving} if $f(U(x,y))=U(f(x),f(y))$ for all $x,y\in P$,
\item {\em strictly upper cone preserving} if $f(U(x,y))=U(f(x),f(y))$ for all $x,y\in P$ with $x\neq y$,
\item {\rm strongly upper cone preserving} if $f(U(x,y))=U(f(x),f(y))$ for all $x,y\in P$ with $f(x)\neq f(y)$.
\end{enumerate}
Observe that for monotone $f$ we have $f(L(x,y))\subseteq L(f(x),f(y))$ and $f(U(x,y))\subseteq U(f(x),f(y))$ for all $x,y\in P$.

Throughout the paper, we consider only non-void posets.

In the following, for every poset $(P,\leq)$ and every element $a\in P$ let $f_a$ denote the constant mapping from $P$ to $P$ with value $a$.

Using of the mapping $f_a$ which is evidently monotone, we can characterize up-directed posets having a maximal element as follows.
 
\begin{lemma}\label{lem1}
Let $\mathbf P=(P,\leq)$ be a poset and $a\in P$. Then $\mathbf P$ is up-directed and $a$ is a maximal element of $\mathbf P$ if and only if $f_a$ is upper cone preserving.
\end{lemma}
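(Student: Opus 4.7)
The plan is to unpack both sides of the defining equation $f_a(U(x,y)) = U(f_a(x), f_a(y))$ and observe that, since $f_a$ is constant with value $a$, the right-hand side is always $U(a,a) = U(a) = [a)$, while the left-hand side is either $\emptyset$ (when $U(x,y) = \emptyset$) or $\{a\}$ (when $U(x,y) \neq \emptyset$). Everything will follow from comparing these two observations.

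For the forward direction, assume $\mathbf{P}$ is up-directed and $a$ is maximal. Then for arbitrary $x, y \in P$ we have $U(x,y) \neq \emptyset$, so $f_a(U(x,y)) = \{a\}$. On the other hand, maximality of $a$ means $[a) = \{a\}$, hence $U(f_a(x), f_a(y)) = U(a) = \{a\}$ as well. The two sides agree, so $f_a$ is upper cone preserving.

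For the converse, assume $f_a$ is upper cone preserving, and let $x,y \in P$ be arbitrary. Since $a \in U(a) = U(f_a(x), f_a(y)) = f_a(U(x,y))$, the set $U(x,y)$ must be nonempty (a constant map sends $\emptyset$ to $\emptyset$); this establishes up-directedness. Then $f_a(U(x,y)) = \{a\}$, so the equation forces $U(a) = \{a\}$, which is precisely the statement that $a$ is a maximal element of $\mathbf{P}$.

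There is no substantial obstacle here; the argument is a straightforward case analysis on whether $U(x,y)$ is empty. The only point requiring a moment's care is to notice that the empty-upper-cone case is what captures up-directedness in the reverse direction, and that maximality is captured by $[a)$ collapsing to $\{a\}$.
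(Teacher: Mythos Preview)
Your proof is correct and follows essentially the same approach as the paper: both arguments hinge on the observation that $U(f_a(x),f_a(y))=U(a)$ while $f_a(U(x,y))$ is $\{a\}$ or $\emptyset$ according as $U(x,y)$ is nonempty or empty, and then read off up-directedness and maximality from matching these. The only cosmetic difference is that the paper extracts maximality by specializing to $x=y=a$, whereas you deduce it from the generic pair once nonemptiness is known; the content is identical.
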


\begin{proof}
Let $b,c\in P$. If $\mathbf P$ is up-directed and $a$ maximal then
\[
f_a(U(b,c))=\{a\}=U(a)=U(a,a)=U(f_a(b),f_a(c))
\]
showing that $f_a$ is upper cone preserving. Conversely, if $f_a$ is upper cone preserving then
\begin{align*}
f_a(U(b,c)) & =U(f_a(b),f_a(c))=U(a,a)=U(a)\supseteq\{a\}\neq\emptyset, \\ 
       U(a) & =U(a,a))=U(f_a(a),f_a(a))=f_a(U(a,a))=\{a\}
\end{align*}
showing that $\mathbf P$ is up-directed and that $a$ is maximal.
\end{proof}

Several elementary facts on cone preserving mappings are stated in the next lemma.

\begin{lemma}\label{lem2}
Let $\mathbf P=(P,\leq)$ be a poset, $f\colon P\rightarrow P$ and $A\subseteq P$. Then the following hold:
\begin{enumerate}[{\rm(i)}]
\item $f$ is monotone if and only if $f(U(x))\subseteq U(f(x))$ for all $x\in P$,
\item if $f$ is upper cone preserving then it is monotone and $f(F)\in\Fil\mathbf P$ for all $F\in\Fil\mathbf P$, 
\item if every monotone mapping from $P$ to $P$ is upper cone preserving then $|P|=1$,
\item if $f$ is monotone then $f(L(A))\subseteq L(f(A))$ and $f(U(A))\subseteq U(f(A))$.
\end{enumerate}
\end{lemma}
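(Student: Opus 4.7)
The plan is to dispatch each of the four parts in turn, using the definitions directly and leaning on Lemma~\ref{lem1} for part (iii).

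Parts (i) and (iv) are short unpackings of definitions. For (i), the forward direction follows because any $y \geq x$ lies in $U(x)$, so the inclusion $f(U(x)) \subseteq U(f(x))$ becomes $f(x) \leq f(y)$; conversely, the hypothesis for a single $x$ is precisely monotonicity applied to all $y \geq x$. Part (iv) is analogous: any $z \in L(A)$ satisfies $z \leq a$ for all $a \in A$, hence $f(z) \leq f(a)$ by monotonicity, placing $f(z) \in L(f(A))$; the argument for $U(A)$ is dual.

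For (ii), monotonicity comes for free: specializing the cone-preserving identity to $y = x$ gives $f(U(x)) = U(f(x))$, and part (i) applies. The substantive claim is that $f(F)$ is a filter whenever $F$ is. Here the idea is that any element $z$ above some $f(x)$ lies in $U(f(x)) = f(U(x))$, so $z = f(w)$ for some $w \geq x$; if $x \in F$ then $w \in F$ by the filter property, hence $z \in f(F)$.

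The interesting piece is (iii), where the main idea is to combine Lemma~\ref{lem1} with the trivial observation that every constant map $f_a$ is monotone. Under the hypothesis, each $f_a$ must be upper cone preserving, so by Lemma~\ref{lem1} the poset $\mathbf P$ is up-directed and every element $a \in P$ is maximal. But then, given $x, y \in P$, any element of the non-empty set $U(x, y)$ must coincide with both $x$ and $y$ by maximality, forcing $x = y$ and hence $|P| = 1$. No genuine obstacle arises anywhere; (iii) is the only part requiring a small idea beyond directly chasing the definitions.
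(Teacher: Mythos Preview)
Your proof is correct and follows essentially the same approach as the paper. The paper's treatment of (ii) presents the filter argument as $f(F)=\bigcup_{x\in F}U(f(x))$, while you do the equivalent element-wise chase; for (iii) the paper simply cites Lemma~\ref{lem1} and the monotonicity of constant maps without spelling out the final step, which you supply explicitly.
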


\begin{proof}
\
\begin{enumerate}[(i)]
\item This is obvious.
\item Assume $f$ to be upper cone preserving. Then
\[
f(U(x))=f(U(x,x))=U(f(x),f(x))=U(f(x))\text{ for all }x\in P
\]
and hence $f$ is monotone according to (i). Moreover, if $F\in\Fil\mathbf P$ then
\begin{align*}
f(F) & =f(\bigcup_{x\in F}U(x))=\bigcup_{x\in F}f(U(x))=\bigcup_{x\in F}f(U(x,x))=\bigcup_{x\in F}U(f(x),f(x))= \\
     & =\bigcup_{x\in F}U(f(x))\in\Fil\mathbf P.
\end{align*}
\item This follows from Lemma~\ref{lem1} by observing that every constant mapping is monotone.
\item If $f$ is monotone and $a\in f(L(A))$ then there exists some $b\in L(A)$ with $f(b)=a$ and since $f$ is monotone we have $a=f(b)\in L(f(A))$. The statement for $U$ follows by duality.
\end{enumerate}
\end{proof}

\begin{example}
Consider the poset depicted in Figure~1:

\vspace*{-2mm}

\begin{center}
\setlength{\unitlength}{7mm}
\begin{picture}(4,6)
\put(1,1){\circle*{.3}}
\put(3,1){\circle*{.3}}
\put(1,3){\circle*{.3}}
\put(3,3){\circle*{.3}}
\put(2,5){\circle*{.3}}
\put(1,1){\line(0,1)2}
\put(1,1){\line(1,1)2}
\put(3,1){\line(-1,1)2}
\put(3,1){\line(0,1)2}
\put(2,5){\line(-1,-2)1}
\put(2,5){\line(1,-2)1}
\put(.3,.85){$a$}
\put(3.4,.85){$b$}
\put(.3,2.85){$c$}
\put(3.4,2.85){$d$}
\put(1.85,5.4){$1$}
\put(1.2,-.3){{\rm Fig.\ 1}}
\end{picture}
\end{center}

Then $f\colon P\rightarrow P$ defined by
\[
f(x):=\left\{
\begin{array}{ll}
c & \text{if }x\in\{a,b,c\}, \\
1 & \text{otherwise}
\end{array}
\right.
\]
is upper cone preserving and hence monotone according to Lemma~\ref{lem2} {\rm(ii)}. Since the above poset is not a singleton there must exist some monotone mapping which is not upper cone preserving according to Lemma~\ref{lem2} {\rm(iii)}. The mapping $g\colon P\rightarrow P$ defined by
\[
g(x):=\left\{
\begin{array}{ll}
b & \text{if }x=a, \\
x & \text{otherwise}
\end{array}
\right.
\]
is monotone, but not upper cone preserving since
\[
g(U(a,b))=g(\{c,d,1\})=\{c,d,1\}\neq\{b,c,d,1\}=U(b)=U(b,b)=U(g(a),g(b)).
\]
\end{example}

For injective mappings, we can show that they are upper cone preserving provided they preserve principal filters.

\begin{proposition}
Let $(P,\leq)$ be a poset. Then every injective mapping $f:P\rightarrow P$ satisfying $f([x))=[f(x))$ for all $x\in P$ is upper cone preserving.
\end{proposition}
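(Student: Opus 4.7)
The plan is to exploit the identity $U(x,y)=[x)\cap[y)$ for all $x,y\in P$ and combine it with the injectivity hypothesis. The proof breaks naturally into the two set inclusions that together yield $f(U(x,y))=U(f(x),f(y))$.

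First I would verify monotonicity of $f$ directly from the principal-filter hypothesis: if $x\le y$ then $y\in[x)$, so $f(y)\in f([x))=[f(x))$, i.e., $f(x)\le f(y)$. By Lemma~\ref{lem2}(iv) (or an immediate direct argument) this gives the ``easy'' inclusion
\[
f(U(x,y))\subseteq U(f(x),f(y))
\]
for all $x,y\in P$, with no use of injectivity yet.

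The substantive step is the reverse inclusion. Fix $x,y\in P$ and let $z\in U(f(x),f(y))$. Then $z\in[f(x))=f([x))$ and $z\in[f(y))=f([y))$, so there exist $a\in[x)$ and $b\in[y)$ with $f(a)=z=f(b)$. Here is where injectivity enters: it forces $a=b$, and this common element lies in $[x)\cap[y)=U(x,y)$, whence $z=f(a)\in f(U(x,y))$. This step is the only nontrivial one, and the only potential obstacle is noticing that without injectivity one would be stuck with two possibly distinct preimages lying in the two different principal filters rather than in their intersection.

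Combining the two inclusions yields $f(U(x,y))=U(f(x),f(y))$ for all $x,y\in P$, so $f$ is upper cone preserving. The whole argument is short, uses the hypothesis $f([x))=[f(x))$ twice (once for $x$, once for $y$), and uses injectivity exactly once to collapse the two preimages to a single element of $U(x,y)$.
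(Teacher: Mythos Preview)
Your proof is correct and follows essentially the same approach as the paper: both exploit $U(x,y)=[x)\cap[y)$ together with $f([x))=[f(x))$ and use injectivity precisely to pass from $f([x))\cap f([y))$ back to $f([x)\cap[y))$. The paper compresses your two-inclusion argument into the single chain $f(U(a,b))=f(U(a)\cap U(b))=f(U(a))\cap f(U(b))=U(f(a))\cap U(f(b))=U(f(a),f(b))$, invoking the general fact that injective maps commute with intersections, whereas you unpack that step explicitly; the content is the same.
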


\begin{proof}
If $a,b\in P$ and $f:P\rightarrow P$ is injective and satisfies $f([x))=[f(x))$ for all $x\in P$ then $f(U(a))=U(f(a))$ and
\[
f(U(a,b))=f(U(a)\cap U(b))=f(U(a))\cap f(U(b))=U(f(a))\cap U(f(b))=U(f(a),f(b)).
\]
\end{proof}

\section{Mappings determined by two elements}

In the following, for every poset $(P,\leq)$ and every $a,b\in P$ with $a\neq b$ let $f_{ab}$ denote the mapping from $P$ to $P$ defined by
\[
f_{ab}(x):=\left\{
\begin{array}{ll}
b & \text{if }x=a, \\
x & \text{otherwise}.
\end{array}
\right.
\]

The question when the mapping $f_{ab}$ is strictly monotone is answered in the next proposition.

\begin{proposition}\label{prop1} 
Let $(P,\leq)$ be a poset and $a,b\in P$ with $a\neq b$. Then $f_{ab}$ is strictly monotone if and only if $a\parallel b$, $L^*(a)\subseteq L^*(b)$ and $U^*(a)\subseteq U^*(b)$.
\end{proposition}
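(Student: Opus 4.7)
The plan is to prove both implications by a careful case analysis driven by the two-valued nature of $f_{ab}$: it differs from the identity only at the single point $a$, so strict monotonicity of $f_{ab}$ reduces to checking inequalities that involve $a$ on one side.

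For the forward direction, assume $f_{ab}$ is strictly monotone. I would first rule out $a<b$ and $b<a$: in either case one of the pairs $(a,b)$ or $(b,a)$ is strictly ordered, yet $f_{ab}$ sends both $a$ and $b$ to $b$, so no strict inequality in the image can hold; this contradicts strict monotonicity and forces $a\parallel b$. Next, to obtain $L^*(a)\subseteq L^*(b)$, I take $x\in L^*(a)$, so $x<a$ with $x\neq a$. Then $f_{ab}(x)=x$ and $f_{ab}(a)=b$, and strict monotonicity gives $x<b$; since $x\neq b$ (it is strictly less), $x\in L^*(b)$. The inclusion $U^*(a)\subseteq U^*(b)$ is entirely dual.

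For the backward direction, assume $a\parallel b$, $L^*(a)\subseteq L^*(b)$ and $U^*(a)\subseteq U^*(b)$, and take arbitrary $x<y$ in $P$. I would split into three cases according to whether $a$ appears as $x$, as $y$, or not at all. If neither $x$ nor $y$ equals $a$, then $f_{ab}$ acts as the identity on both and the conclusion is immediate. If $x=a$, then $y>a$, so $y\neq a$ (hence $f_{ab}(y)=y$), and $y\in U^*(a)\subseteq U^*(b)$, yielding $f_{ab}(x)=b<y=f_{ab}(y)$. If $y=a$, then $x<a$ means $x\in L^*(a)\subseteq L^*(b)$, so $x<b$, giving $f_{ab}(x)=x<b=f_{ab}(y)$.

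No step looks genuinely delicate; the main thing to be careful about is keeping track of the distinction between $L$ and $L^*$ (and likewise $U$, $U^*$), since the hypotheses are formulated in terms of the starred versions and the equality $x=a$ must be excluded at each step so as to apply the identity branch of $f_{ab}$. The incomparability $a\parallel b$ is used only implicitly in the backward direction (it guarantees the hypotheses are consistent), but is the essential obstruction in the forward direction.
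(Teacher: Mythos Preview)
Your proof is correct and follows essentially the same approach as the paper's: the paper also reduces strict monotonicity of $f_{ab}$ to the pair of implications ``$a<x\Rightarrow b<x$'' and ``$x<a\Rightarrow x<b$'' (together with $a\parallel b$), which is exactly your case analysis written more tersely. Your version is simply a more explicit unpacking of the same argument.
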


\begin{proof}
Obviously, $f_{ab}$ is strictly monotone if $a\parallel b$ (since $b<b$ is impossible) and for all $x\in P$ the following hold:
\begin{eqnarray}
& & a<x\text{ implies }b<x,\label{equ5} \\
& & x<a\text{ implies }x<b.\label{equ7}
\end{eqnarray}
Now (\ref{equ5}) and (\ref{equ7}) are equivalent to $U^*(a)\subseteq U^*(b)$ and $L^*(a)\subseteq L^*(b)$, respectively.
\end{proof}

Similarly, we can ask when the mapping $f_{ab}$ is upper cone preserving. The answer is as follows.

\begin{theorem}\label{th2}
Let $\mathbf P=(P,\leq)$ be a poset and $a,b\in P$ with $a\neq b$. Then $f_{ab}$ is upper cone preserving if and only if $a$ is a minimal element of $\mathbf P$ and $U^*(a)=U(b)$.
\end{theorem}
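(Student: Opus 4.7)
The plan is to prove both implications by direct computation with the definition of $f_{ab}$, exploiting the fact that $f_{ab}$ is the identity off $a$ and sends $a$ to $b$.

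For the \emph{sufficiency} direction, I would assume $a$ is minimal and $U^*(a)=U(b)$. Minimality combined with $a\neq b$ forces $z\not\le a$ for every $z\neq a$, so $a\notin U(z)$ and hence $f_{ab}$ fixes $U(z)$ pointwise whenever $z\neq a$. A short case analysis on whether $x,y$ equal $a$ then reduces the required identity $f_{ab}(U(x,y))=U(f_{ab}(x),f_{ab}(y))$ to the key calculation $f_{ab}(U(a))=\{b\}\cup U^*(a)=U(b)$, together with the observation that in the mixed case $x=a$, $y\neq a$ one has $U(a,y)=U(b,y)$ (using $U(a)=\{a\}\cup U(b)$ and $a\notin U(y)$).

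For the \emph{necessity} direction, suppose $f_{ab}$ is upper cone preserving. I would first deduce minimality of $a$ by specialising the hypothesis to $x=y=z$ with $z\neq a$, giving $f_{ab}(U(z))=U(z)$. Since nothing outside $\{a\}$ maps to $a$ under $f_{ab}$ and $f_{ab}(a)=b\neq a$, the element $a$ cannot lie in $f_{ab}(U(z))=U(z)$; hence $z\not\le a$, proving $a$ minimal. Specialising next to $x=y=a$ yields $\{b\}\cup U^*(a)=f_{ab}(U(a))=U(b)$.

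The decisive step, which I expect to be the main obstacle, is to upgrade this relation to $U^*(a)=U(b)$, equivalently to $a<b$. For this I would apply the hypothesis at $x=a$, $y=b$, whose right-hand side is $U(f_{ab}(a),f_{ab}(b))=U(b)$, and argue by contradiction supposing $a\not<b$. Minimality gives $a\notin U(b)$, and $a\not<b$ gives $b\notin U(a)$, so $U(a,b)=U^*(a)\cap U^*(b)$. The identity $\{b\}\cup U^*(a)=\{b\}\cup U^*(b)$, together with $b\notin U^*(a)$ (from $a\not<b$) and $b\notin U^*(b)$, forces $U^*(a)=U^*(b)$, whence $U(a,b)=U^*(a)$ and $f_{ab}(U(a,b))=U^*(a)$. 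Equating with $U(b)$ yields $b\in U^*(a)$, contradicting $a\not<b$. Therefore $a<b$, so $b\in U^*(a)$ and the earlier relation $\{b\}\cup U^*(a)=U(b)$ collapses to the desired $U^*(a)=U(b)$.
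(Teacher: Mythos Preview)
Your proof is correct and follows essentially the same strategy as the paper: both directions are handled by specializing the upper-cone-preserving identity to the pairs $(a,a)$, $(z,z)$ with $z\neq a$, and $(a,b)$, and both sufficiency arguments are identical case analyses. The one notable difference is in the necessity direction: the paper establishes $a<b$ more directly by first checking $b\not\le a$ (from $(a,a)$) and then reading off $b\in f_{ab}(U(a,b))=U(a,b)\subseteq U(a)$ immediately, whereas your contradiction argument routed through $U^*(a)=U^*(b)$ is longer than necessary---but it is valid.
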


\begin{proof}
Let $c,d\in P\setminus\{a\}$. First assume $f_{ab}$ to be upper cone preserving. Then $b\leq a$ would imply
\[
a\in U(b)=U(b,b)=U(f_{ab}(a),f_{ab}(a))=f_{ab}(U(a,a))=f_{ab}(U(a))=U^*(a)\cup\{b\}
\]
and hence $a=b$, a contradiction. Therefore $b\not\leq a$. Now
\[
b\in U(b)=U(b,b)=U(f_{ab}(a),f_{ab}(b))=f_{ab}(U(a,b))=U(a,b)\subseteq U(a)
\]
since $a\notin U(a,b)$ and hence $a\leq b$, i.e.\ $a<b$. Now $c<a$ would imply
\[
a\in U(c)=U(c,c)=U(f_{ab}(c),f_{ab}(c))=f_{ab}(U(c,c))=f_{ab}(U(c))=((U(c))\setminus\{a\})\cup\{b\}
\]
and hence $a=b$, a contradiction. This shows that $a$ is minimal. Moreover,
\[
U^*(a)=f_{ab}(U(a))=f_{ab}(U(a,a))=U(f_{ab}(a),f_{ab}(a))=U(b,b)=U(b)
\]
since $b\in U^*(a)$. Conversely, assume $a$ to be minimal and $U^*(a)=U(b)$. Then $a<b$ and
\begin{align*}
f_{ab}(U(a,a)) & =f_{ab}(U(a))=U^*(a)=U(b)=U(b,b)=U(f_{ab}(a),f_{ab}(a)), \\
f_{ab}(U(a,c)) & =U(a,c)=U(b,c)=U(f_{ab}(a),f_{ab}(c)), \\
f_{ab}(U(c,d)) & =U(c,d)=U(f_{ab}(c),f_{ab}(d))
\end{align*}
and hence $f_{ab}$ is upper cone preserving. Observe that $c,d\not\leq a$ because of $c,d\neq a$ and the minimality of $a$. 
\end{proof}

It should be remarked that $U^*(a)=U(b)$ implies $a\prec b$. Namely, from $b\in U(b)=U^*(a)$ we conclude $a<b$. If there would exist some $c\in P$ with $a<c<b$ then $c\in U^*(a)=U(b)$, a contradiction. This shows $a\prec b$.

By Lemma~\ref{lem2} (ii), every upper cone preserving mapping is monotone. The question is for which posets not every strictly monotone mapping is upper cone preserving. The answer is as follows.

\begin{remark}
If $(P,\leq)$ is a poset containing two elements $a$ and $b$ with $a\parallel b$ satisfying $L^*(a)\subseteq L^*(b)$ and $U^*(a)\subseteq U^*(b)$ then not every strictly monotone mapping from $P$ to $P$ is upper cone preserving. Such a poset is depicted in Fig.~1.
\end{remark}

\begin{proof}
Let $(P,\leq)$ be a poset having two elements $a$ and $b$ with $a\parallel b$ satisfying $L^*(a)\subseteq L^*(b)$ and $U^*(a)\subseteq U^*(b)$. Then $f_{ab}$ is strictly monotone by Proposition~\ref{prop1}, but not upper cone preserving by Theorem~\ref{th2}.
\end{proof}

In Theorem~\ref{th2} we characterized when the mapping $f_{ab}$ is upper cone preserving. Now we show when this mapping is strictly upper cone preserving.

\begin{theorem}
Let $(P,\leq)$ be a poset and $a,b\in P$ with $a\neq b$. Then $f_{ab}$ is strictly upper cone preserving if and only if $|L(a)|\leq2$ and $U^*(a)=U(b)$.
\end{theorem}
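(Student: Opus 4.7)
My plan is to adapt the proof of Theorem~\ref{th2}, noting that strictly upper cone preserving differs from upper cone preserving only by excluding the diagonal $x=y$. In Theorem~\ref{th2} the minimality of $a$ was forced by substituting $x=y=c$ for $c<a$; dropping this instance permits one element strictly below $a$ but still excludes a second, which is precisely the content of $|L(a)|\leq 2$.

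For the forward direction, assume $f_{ab}$ is strictly upper cone preserving. Applying the defining identity to the pair $(a,b)$, I first rule out $b\leq a$: it would force $U(a,b)=U(a)$, so $f_{ab}(U(a))=U^*(a)\cup\{b\}$ would have to coincide with $U(b)=U(b,b)$, yet the former omits $a$ while $a\in U(b)$. Hence $b\not\leq a$, so $a\notin U(a,b)$ and $f_{ab}(U(a,b))=U(a,b)$, giving $U(a,b)=U(b)$; this yields $a<b$ and $U(b)\subseteq U^*(a)$. For the reverse inclusion, any $c\in U^*(a)\setminus\{b\}$ satisfies $a\notin U(c)=U(a,c)$, so applying the identity to $(a,c)$ gives $U(c)=U(b,c)$, whence $b\leq c$. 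Finally, assuming distinct $c,d\in L^*(a)$, the pair $(c,d)$ would yield $a\in U(c,d)$, so $f_{ab}(U(c,d))=(U(c,d)\setminus\{a\})\cup\{b\}$ omits $a$ while $U(f_{ab}(c),f_{ab}(d))=U(c,d)$ contains $a$, a contradiction; hence $|L(a)|\leq 2$.

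For the backward direction, assume $|L(a)|\leq 2$ and $U^*(a)=U(b)$ (so in particular $a<b$), and take $x\neq y$. If $a\notin\{x,y\}$, then $a\in U(x,y)$ would put three distinct elements $x,y,a$ into $L(a)$, contradicting the hypothesis; so $a\notin U(x,y)$ and both sides of the identity equal $U(x,y)$. If $x=a$ and $y\neq a$, I split on the position of $y$ relative to $a$: for $y<a$ (the new case absent from Theorem~\ref{th2}), $U(a,y)=U(a)$ is sent to $U^*(a)\cup\{b\}=U(b)$, which equals $U(b,y)$ since $y<a<b$ forces $U(b)\subseteq U(y)$; the subcases $y\parallel a$ and $a<y$ mirror the corresponding computations in Theorem~\ref{th2}.

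The main obstacle is organizing the case split cleanly: one must verify that the weaker hypothesis $|L(a)|\leq 2$ still prevents $a$ from sneaking into an upper cone $U(x,y)$ when $a\notin\{x,y\}$, and that the genuinely new subcase $y<a$ is reconciled with $U^*(a)=U(b)$ via the observation that the unique element of $L^*(a)$, if it exists, automatically lies below $b$.
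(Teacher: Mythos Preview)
Your proof is correct and follows essentially the same approach as the paper's: the forward direction uses the pair $(a,b)$ to force $a<b$, then pairs $(a,c)$ with $c\in U^*(a)$ to obtain $U^*(a)\subseteq U(b)$, and finally a pair $(c,d)$ with $c,d\in L^*(a)$ to bound $|L(a)|$; the backward direction is the same case analysis, with your three-way split on the position of $y$ relative to $a$ being a slightly finer version of the paper's two-way split $g\leq a$ versus $g\not\leq a$. Your explicit framing of the argument as a perturbation of Theorem~\ref{th2} is a nice touch not present in the paper, but the underlying computations coincide.
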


\begin{proof}
First assume $f_{ab}$ to be strictly upper cone preserving. Then $b\leq a$ would imply
\[
a\in U(b)=U(b,b)=U(f_{ab}(a),f_{ab}(b))=f_{ab}(U(a,b))=f_{ab}(U(a))=U^*(a)\cup\{b\}
\]
and hence $a=b$, a contradiction. Therefore $b\not\leq a$. Now
\[
b\in U(b)=U(b,b)=U(f_{ab}(a),f_{ab}(b))=f_{ab}(U(a,b))=U(a,b)\subseteq U(a)
\]
since $a\notin U(a,b)$ and hence $a\leq b$, i.e.\ $a<b$ and therefore $U(b)\subseteq U^*(a)$. If $c\in U^*(a)$ then
\[
c\in U(c)=f_{ab}(U(c))=f_{ab}(U(a,c))=U(f_{ab}(a),f_{ab}(c))=U(b,c)\subseteq U(b)
\]
showing $U^*(a)\subseteq U(b)$. Altogether, we obtain $U^*(a)=U(b)$. Now $|L(a)|>2$ would imply that there exist $d,e\in P$ with $d\neq e$ and $d,e<a$ and hence
\[
f_{ab}(U(d,e))=(U(d,e))\setminus\{a\}\neq U(d,e)=U(f_{ab}(d),f_{ab}(e))
\]
contradicting the fact that $f_{ab}$ is strongly upper cone preserving. Hence $|L(a)|\leq2$. If, conversely, $|L(a)|\leq2$ and $U^*(a)=U(b)$ and $g,h\in P\setminus\{a\}$ then
\begin{align*}
f_{ab}(U(a,g)) & =\left\{
\begin{array}{ll}
f_{ab}(U(a))=U^*(a)=U(b)=U(b,g)=U(f_{ab}(a),f_{ab}(g)) & \text{if }g\leq a, \\
U(a,g)=U(b,g)=U(f_{ab}(a),f_{ab}(g))                   & \text{otherwise},
\end{array}
\right. \\
f_{ab}(U(g,h)) & =U(g,h)=U(f_{ab}(g),f_{ab}(h))\text{ if }g\neq h
\end{align*}
and hence $f_{ab}$ is strictly upper cone preserving.
\end{proof}

\section{Chains}

Chains are relatively simple posets. We derive an easy condition under which every strictly monotone mapping on a chain is upper cone preserving.

\begin{proposition}
Let $\mathbf P=(P,\leq)$ be a chain and $f$ a strictly monotone mapping from $P$ to $P$. Then $f$ is upper cone preserving if and only if $f(P)\in\Fil\mathbf P$.
\end{proposition}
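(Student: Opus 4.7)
My plan is to reduce the upper-cone-preserving condition to the single condition $f([y)) = [f(y))$ for every $y \in P$. Since $\mathbf P$ is a chain, the binary upper cone $U(x,y)$ coincides with $[\max(x,y))$, and strict monotonicity trivially implies monotonicity, so the equation $f(U(x,y)) = U(f(x),f(y))$ reduces precisely to $f([z)) = [f(z))$ at $z = \max(x,y)$. The forward direction is then almost free: assuming $f$ is upper cone preserving, Lemma~\ref{lem2}(ii) applied to the filter $P$ itself yields $f(P) \in \Fil \mathbf P$ at once. (Alternatively, $f(P) = \bigcup_{y \in P} f([y)) = \bigcup_{y \in P} [f(y))$ is a union of principal filters.)

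For the converse, assuming $f(P) \in \Fil \mathbf P$, I would establish $f([y)) = [f(y))$ for each $y \in P$. The inclusion $f([y)) \subseteq [f(y))$ is immediate from monotonicity. For the reverse inclusion, take $w \in P$ with $f(y) \le w$. Since $f(y) \in f(P)$ and $f(P)$ is a filter, $w \in f(P)$, so $w = f(z)$ for some $z \in P$. The chain hypothesis is now decisive: either $y \le z$ or $z < y$, and in the latter case strict monotonicity would give $f(z) < f(y) \le w = f(z)$, a contradiction. Hence $y \le z$, so $w = f(z) \in f([y))$, as required.

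The main (mild) obstacle is exactly this lifting step in the converse: without the totality of the order, the preimage $z$ of $w$ could be incomparable with $y$, and strict monotonicity would then say nothing about $f(z)$ versus $f(y)$. The chain hypothesis is used for nothing else in the argument.
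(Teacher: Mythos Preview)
Your proof is correct and follows essentially the same approach as the paper's. Both arguments use Lemma~\ref{lem2}(ii) for the forward direction, and for the converse both pick a preimage of an arbitrary element $w\ge f(y)$ using the filter hypothesis and then invoke the chain property together with strict monotonicity to force that preimage into $[y)$; your explicit reduction to the single-variable condition $f([y))=[f(y))$ is just a slight repackaging of the paper's choice to work with $U(a,b)$ under the standing assumption $a\le b$.
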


\begin{proof}
First assume $f(P)\in\Fil\mathbf P$. Let $a,b\in P$ with $a\leq b$ and $c\in U(f(a),f(b))$. Then $c\geq f(b)$. Since $f(P)\in\Fil\mathbf P$, there exists some $d\in P$ with $c=f(d)$. Now $d<b$ would imply $c=f(d)<f(b)$, a contradiction. Hence $d\geq b$ and $c=f(d)\in f(U(b))=f(U(a,b))$. This shows $U(f(a),f(b))\subseteq f(U(a,b))$. The opposite inclusion follows from Lemma~\ref{lem2} (iv). Hence $f$ is upper cone preserving. The rest of the proof follows from Lemma~\ref{lem2} (ii).
\end{proof}

Another interesting question concerns posets which are semilattices. Because every semilattice homomorphism is a monotone mapping, we can ask when every monotone mapping of a given semilattice into itself is a homomorphism. Using the method developed by Berrone (\cite B), we can prove the following result.

\begin{theorem}\label{th1}
A join-semilattice $(P,\vee)$ is a chain if and only if every monotone mapping from $P$ to $P$ is a homomorphism.
\end{theorem}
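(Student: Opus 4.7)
The plan is to prove this biconditional directly, with the forward direction being a short calculation and the reverse direction proved by contrapositive via an explicit construction. What makes the statement work is that in a chain the join is always one of the two arguments, while in a non-chain one can independently manipulate two incomparable elements without disturbing monotonicity elsewhere.

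For the easy direction, suppose $(P,\vee)$ is a chain and $f\colon P\to P$ is monotone. Given $x,y\in P$, by totality I may assume $x\le y$, so that $x\vee y=y$. Monotonicity gives $f(x)\le f(y)$, whence $f(x)\vee f(y)=f(y)=f(x\vee y)$, so $f$ is a semilattice homomorphism.

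For the converse I argue by contrapositive: assuming $(P,\vee)$ is not a chain, I exhibit a monotone map that is not a homomorphism. Choose $a,b\in P$ with $a\parallel b$ and set $c:=a\vee b$, so $a<c$ and $b<c$ strictly. Define $f\colon P\to P$ by
\[
f(x):=\left\{\begin{array}{ll} a & \text{if } x\le b,\\ x\vee a & \text{otherwise.}\end{array}\right.
\]
The evaluations are $f(a)=a\vee a=a$ (since $a\not\le b$), $f(b)=a$, and $f(c)=c\vee a=c$ (since $c>b$ implies $c\not\le b$, and $c\ge a$). Therefore $f(a)\vee f(b)=a$ while $f(a\vee b)=f(c)=c\neq a$, so $f$ fails to preserve the join $a\vee b$. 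Monotonicity is verified by a short case split on $x\le y$: if $y\le b$ then also $x\le b$ and both sides equal $a$; if $x\le b$ but $y\not\le b$ then $f(x)=a\le y\vee a=f(y)$; and if $x\not\le b$ then $f(x)=x\vee a\le y\vee a=f(y)$. (The case $x\not\le b$, $y\le b$ is vacuous, since $x\le y\le b$ would force $x\le b$.)

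The main obstacle is the design of $f$: one must arrange $f(a)\vee f(b)<f(a\vee b)$ without destroying monotonicity at any other comparable pair. The trick is to collapse the entire principal ideal $(\,\cdot\,]\,b$ down to $a$, which is monotone because the common target $a$ lies below the ``identity-shifted'' values $x\vee a$ taken elsewhere, while $c$ itself sits outside this collapsed region and so retains a value strictly above $a$.
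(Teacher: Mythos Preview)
Your proof is correct and follows the same overall strategy as the paper: the forward direction is identical, and for the converse both argue by contrapositive, exhibiting an explicit monotone non-homomorphism built from two incomparable elements. The only difference is the particular counterexample---the paper uses the simpler two-valued step function $g(x)=c$ if $x<c\vee d$ and $g(x)=c\vee d$ otherwise (with $c\parallel d$), while you collapse the principal ideal $(b]$ to $a$ and send everything else to $x\vee a$; both constructions achieve the same effect $f(a)=f(b)<f(a\vee b)$.
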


\begin{proof}
Let $\mathbf P=(P,\vee)$ be a join-semilattice and $a,b\in P$. If $\mathbf P$ is a chain and $f$ a monotone mapping from $P$ to $P$ then
\[
f(a\vee b)=\left\{
\begin{array}{ll}
f(b)=f(a)\vee f(b) & \text{if }a\leq b, \\
f(a)=f(a)\vee f(b) & \text{otherwise}
\end{array}
\right.
\]
and hence $f$ is a homomorphism. Now assume $\mathbf P$ not to be a chain. Then there exist $c,d\in P$ with $c\parallel d$. Define $g:P\rightarrow P$ by
\[
g(x):=\left\{
\begin{array}{ll}
c       & \text{if }x<c\vee d, \\
c\vee d & \text{otherwise}.
\end{array}
\right.
\]
Assume $a\leq b$. If $a<c\vee d$ then $g(a)=c\leq g(b)$. If $a\not<c\vee d$ then $b\not<c\vee d$ and hence $g(a)=c\vee d=g(b)$. This shows that $g$ is monotone. But $g$ is not a homomorphism since
\[
g(c\vee d)=c\vee d\neq c=c\vee c=g(c)\vee g(d).
\]
We have proved that there exists a monotone mapping from $P$ to $P$ that is not a homomorphism .
\end{proof}

By duality, Theorem~\ref{th1} also holds for meet-semilattices and hence also for lattices. On the other hand, the result of Theorem~\ref{th1} can be extended to direct products of chains. For this, let us recall the following concepts.

For $i=1,2$ let $f_i\colon A_i\rightarrow B_i$. Then $f_1\times f_2$ denotes the mapping from $A_1\times A_2$ to $B_1\times B_2$ defined by
\[
(f_1\times f_2)(x_1,x_2):=(f_1(x_1),f_2(x_2))\text{ for all }(x_1,x_2)\in A_1\times A_2.
\]
A mapping $g:A_1\times A_2\rightarrow B_1\times B_2$ is called {\em directly decomposable} if there exist $g_1\colon A_1\rightarrow B_1$ and $g_2\colon A_2\rightarrow B_2$ with $g_1\times g_2=g$.

Let $C$ be a chain and $P:=C\times C$. As proved in \cite{CGL}, every lattice homomorphism from $P$ to $P$ is directly decomposable since $P$ is a lattice and the variety of lattices is congruence distributive. We can ask if monotone directly decomposable mappings from $P$ to $P$ are lattice homomorphisms. The following corollary of Theorem~\ref{th1} gives a positive answer.

\begin{corollary}
Let $(C_1,\leq)$, $(C_2,\leq)$ be chains, $(P,\leq):=(C_1,\leq)\times(C_2,\leq)$ and $f$ a monotone directly decomposable mapping from $P$ to $P$. Then $f$ is a lattice homomorphism.
\end{corollary}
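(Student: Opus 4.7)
The plan is to exploit the direct decomposition of $f$ to reduce everything to the one-factor case and then invoke Theorem~\ref{th1} (and its dual for meet-semilattices).

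First I would write $f=f_1\times f_2$ with $f_i\colon C_i\to C_i$ (which exists by hypothesis), and then verify that each $f_i$ is itself monotone. This step is the technical heart, but it is very short: fix any $z_2\in C_2$ (possible since our posets are non-void); if $x\leq y$ in $C_1$ then $(x,z_2)\leq(y,z_2)$ in $P$, so by monotonicity of $f$ we get $(f_1(x),f_2(z_2))\leq(f_1(y),f_2(z_2))$, whence $f_1(x)\leq f_1(y)$. Analogously for $f_2$.

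Next, since each $C_i$ is a chain, it is both a join-semilattice and a meet-semilattice. By Theorem~\ref{th1} applied to the chain $(C_i,\vee)$, the monotone map $f_i$ is a join-homomorphism; by the dual version of Theorem~\ref{th1} (noted explicitly after the theorem) applied to $(C_i,\wedge)$, the map $f_i$ is also a meet-homomorphism. Thus each $f_i$ is a lattice homomorphism on $C_i$.

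Finally, I would combine these componentwise: in $P=C_1\times C_2$ joins and meets are computed coordinatewise, so for $(x_1,x_2),(y_1,y_2)\in P$,
\[
f\bigl((x_1,x_2)\vee(y_1,y_2)\bigr)=\bigl(f_1(x_1\vee y_1),f_2(x_2\vee y_2)\bigr)=\bigl(f_1(x_1)\vee f_1(y_1),f_2(x_2)\vee f_2(y_2)\bigr),
\]
which equals $f(x_1,x_2)\vee f(y_1,y_2)$, and the same computation works for $\wedge$. Hence $f$ is a lattice homomorphism. There is no real obstacle here beyond the bookkeeping of the direct decomposition; the substantive content is already contained in Theorem~\ref{th1}.
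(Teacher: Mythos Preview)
Your proposal is correct and follows essentially the same approach as the paper's proof: decompose $f=f_1\times f_2$, observe that each $f_i$ is monotone, apply Theorem~\ref{th1} (and its dual) to conclude each $f_i$ is a lattice homomorphism, and then combine componentwise. The paper's version is terser---it simply asserts the monotonicity of the $f_i$ and the passage from componentwise homomorphisms to a homomorphism of the product---whereas you have spelled out these routine verifications explicitly.
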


\begin{proof}
If $f=f_1\times f_2$ with $f_i\colon C_i\rightarrow C_i$ for $i=1,2$ then $f_1,f_2$ are a monotone and, by Theorem~\ref{th1}, also (semi-)lattice homomorphisms which implies that $f$ is (semi-)lattice homomorphism, too.
\end{proof}

Direct decomposability of homomorphisms was investigated by the authors and M.~Goldstern in \cite{CGL}. For mappings which need not be homomorphisms we cannot use methods involved in congruence distributive varieties. A simple characterization of directly decomposable mappings is formulated in the following lemma.

\begin{lemma}
Let $A_1,A_2,B_1,B_2$ be non-void sets and $f:A_1\times A_2\rightarrow B_1\times B_2$ and for $i=1,2$ let $p_i$ denote the projection of $B_1\times B_2$ onto $B_i$. Then the following are equivalent:
\begin{enumerate}[{\rm(i)}]
\item $f$ is decomposable,
\item $p_1(f(x_1,x_2))=p_1(f(x_1,y_2))$ and $p_2(f(x_1,x_2))=p_2(f(y_1,x_2))$ for all $x_1,y_1\in A_1$ and $x_2,y_2\in A_2$.
\end{enumerate}
\end{lemma}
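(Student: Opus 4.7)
The plan is to prove both implications directly from the definitions, with the nontrivial direction relying on the non-emptiness of $A_1$ and $A_2$ to pick reference elements.

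For (i)$\Rightarrow$(ii), I would simply unpack the definition: if $f=g_1\times g_2$ then $f(x_1,x_2)=(g_1(x_1),g_2(x_2))$, so $p_1(f(x_1,x_2))=g_1(x_1)$ depends only on the first coordinate and $p_2(f(x_1,x_2))=g_2(x_2)$ depends only on the second. Both equalities in (ii) are then immediate.

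For (ii)$\Rightarrow$(i), I would use that $A_1,A_2$ are non-void to fix once and for all elements $a_1\in A_1$ and $a_2\in A_2$, and define
\[
g_1\colon A_1\to B_1,\ g_1(x_1):=p_1(f(x_1,a_2)),\qquad g_2\colon A_2\to B_2,\ g_2(x_2):=p_2(f(a_1,x_2)).
\]
Then for arbitrary $(x_1,x_2)\in A_1\times A_2$, the first condition in (ii) (applied with $y_2=a_2$) gives $p_1(f(x_1,x_2))=p_1(f(x_1,a_2))=g_1(x_1)$, and the second (applied with $y_1=a_1$) gives $p_2(f(x_1,x_2))=p_2(f(a_1,x_2))=g_2(x_2)$. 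Since every element of $B_1\times B_2$ is determined by its two projections, this yields $f(x_1,x_2)=(g_1(x_1),g_2(x_2))=(g_1\times g_2)(x_1,x_2)$, so $f$ is directly decomposable.

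There is no real obstacle here; the only subtlety is ensuring that the choice of reference elements $a_1,a_2$ is legitimate, which is exactly what the non-voidness hypothesis guarantees. Note also that the definitions of $g_1$ and $g_2$ are independent of which specific $a_1,a_2$ one picks, precisely because (ii) says the relevant projections are independent of the other coordinate.
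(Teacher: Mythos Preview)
Your proof is correct and follows essentially the same approach as the paper: both directions are handled exactly as you describe, with the nontrivial implication proved by fixing reference elements $a_1\in A_1$, $a_2\in A_2$ and defining the component maps via $p_1(f(\,\cdot\,,a_2))$ and $p_2(f(a_1,\,\cdot\,))$.
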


\begin{proof}
$\text{}$ \\
(i) $\Rightarrow$ (ii): \\
If $f=f_1\times f_2$ then
\begin{align*}
p_1(f(x_1,x_2)) & =p_1(f_1(x_1),f_2(x_2))=f_1(x_1)=p_1(f_1(x_1),f_2(y_2))=p_1(f(x_1,y_2)), \\
p_2(f(x_1,x_2)) & =p_2(f_1(x_1),f_2(x_2))=f_2(x_2)=p_2(f_1(y_1),f_2(x_2))=p_2(f(y_1,x_2))
\end{align*}
for all $x_1,y_1\in A_1$ and $x_2,y_2\in A_2$. \\
(ii) $\Rightarrow$ (i): \\
Let $a_1\in A_1$ and $a_2\in A_2$ and for $i=1,2$ define $f_i\colon A_i\rightarrow B_i$ by
\begin{align*}
f_1(x_1) & :=p_1(f(x_1,a_2))\text{ for all }x_1\in A_1, \\
f_2(x_2) & :=p_2(f(a_1,x_2))\text{ for all }x_2\in A_2.
\end{align*}
Because of (ii), $f_1$ and $f_2$ are well-defined and
\[
f(x_1,x_2)=(p_1(f(x_1,x_2)),p_2(f(x_1,x_2)))=(p_1(f(x_1,a_2)),p_2(f(a_1,x_2)))=(f_1(x_1),f_2(x_2))
\]
for all $(x_1,x_2)\in A_1\times A_2$, i.e.\ $f=f_1\times f_2$.
\end{proof}

Instead of join-semilattices we can investigate posets whose upper cones $U(x,y)$ have a minimal element. Of course, every join-semilattice has this property, but there many other examples of such posets, e.g.\ all finite up-directed posets.

\begin{theorem}
If $(P,\leq)$ is a poset, $a,b\in P$, $a\parallel b$ and $U(a,b)$ has a minimal element then there exists a monotone mapping $f$ from $P$ to $P$ with $f(U(a,b))\neq U(f(a),f(b))$ and hence there exists a monotone mapping from $P$ to $P$ which is not strictly upper cone preserving.
\end{theorem}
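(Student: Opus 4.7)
The plan is to mimic the construction from the proof of Theorem~\ref{th1}, replacing the join $c\vee d$ by a minimal element $m$ of $U(a,b)$. Write $m$ for such a minimal element. Because $m\in U(a,b)$ we have $a\leq m$ and $b\leq m$, and $a=m$ would force $b\leq a$, contradicting $a\parallel b$; hence $a<m$, and symmetrically $b<m$.

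Next I would define $f\colon P\to P$ by
\[
f(x):=\left\{
\begin{array}{ll}
a & \text{if }x<m, \\
m & \text{otherwise}.
\end{array}
\right.
\]
To verify that $f$ is monotone, assume $x\leq y$. If both $x,y<m$, then $f(x)=a=f(y)$; if $x<m$ and $y\not<m$, then $f(x)=a<m=f(y)$; if $x\not<m$ and $y\not<m$, then $f(x)=m=f(y)$; the remaining case $x\not<m$ and $y<m$ cannot occur since $x\leq y<m$ would force $x<m$.

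The key calculation is to evaluate the two sets in question. Since $a<m$ and $b<m$, we have $f(a)=f(b)=a$, so $U(f(a),f(b))=U(a)$, which contains $a$. On the other hand, minimality of $m$ in $U(a,b)$ means no element of $U(a,b)$ lies strictly below $m$, so $f(x)=m$ for every $x\in U(a,b)$; thus $f(U(a,b))=\{m\}$. Since $a\neq m$, the element $a\in U(f(a),f(b))$ fails to lie in $f(U(a,b))$, establishing $f(U(a,b))\neq U(f(a),f(b))$. Because $a\neq b$, this same pair witnesses that $f$ is not strictly upper cone preserving, proving the second assertion.

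I do not expect any real obstacle here; the only subtle point is verifying that $x\in U(a,b)$ really entails $x\not<m$, which is exactly the definition of $m$ being a minimal element of $U(a,b)$. Everything else is a routine case check.
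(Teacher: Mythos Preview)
Your proof is correct and follows essentially the same approach as the paper: the same mapping $f$ (with your $m$ playing the role of the paper's $c$), the same monotonicity check, and the same observation that $a\in U(f(a),f(b))\setminus f(U(a,b))$. The only cosmetic difference is that you compute $f(U(a,b))=\{m\}$ directly, whereas the paper derives the contradiction by assuming $a\in f(U(a,b))$ and tracing a preimage.
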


\begin{proof}
Let $(P,\leq)$ be a poset and $a,b,c\in P$ and assume $a\parallel b$ and that $c$ is a minimal element of $U(a,b)$. Define $f:P\rightarrow P$ by
\[
f(x):=\left\{
\begin{array}{ll}
a & \text{if }x<c, \\
c & \text{otherwise}.
\end{array}
\right.
\]
Let $d,e\in P$ with $d\leq e$. If $d<c$ then $f(d)=a\leq f(e)$. If $d\not<c$ then $e\not<c$ and hence $f(d)=c=f(e)$. This shows that $f$ is monotone. We have $a,b\leq c$. Since $a=c$ would imply $b\leq c=a$ and $b=c$ would imply $a\leq c=b$, we have $a,b<c$ and therefore $a\in U(a)=U(a,a)=U(f(a),f(b))$. Now assume $f(U(a,b))=U(f(a),f(b))$. Then $a\in f(U(a,b))$ and hence there exists some $d\in U(a,b)$ with $f(d)=a$. Since $c$ is a minimal element of $U(a,b)$ we have $d\not<c$ and hence $a=f(d)=c$, a contradiction. Therefore $f(U(a,b))\neq U(f(a),f(b))$.
\end{proof}

\begin{corollary}
If $(P,\leq)$ is a poset which is not a chain and which satisfies the Descending Chain Condition then there exists a monotone mapping from $P$ to $P$ which is not strictly upper cone preserving and hence not upper cone preserving.
\end{corollary}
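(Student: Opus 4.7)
The plan is to derive this as a case analysis applied to the previous theorem, with the Descending Chain Condition (DCC) supplying the minimal element that the theorem needs.

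Since $(P,\leq)$ is not a chain, I would first pick two incomparable elements $a,b\in P$ with $a\parallel b$. The key fact I want to use is that DCC is equivalent to every non-empty subset of $P$ possessing a minimal element, so in particular, if $U(a,b)$ is non-empty it has a minimal element. This opens the door to invoking the preceding theorem.

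Next I would split on whether the upper cone $U(a,b)$ is empty. In the generic case $U(a,b)\neq\emptyset$, DCC yields a minimal element $c\in U(a,b)$, and the preceding theorem directly produces a monotone mapping $f\colon P\rightarrow P$ with $f(U(a,b))\neq U(f(a),f(b))$; since $a\neq b$, this witnesses failure of strict upper cone preservation. In the edge case $U(a,b)=\emptyset$, I would simply take the constant mapping $f_a$, which is monotone. Then $f_a(U(a,b))=f_a(\emptyset)=\emptyset$, whereas $U(f_a(a),f_a(b))=U(a,a)=U(a)$ contains $a$ and is therefore non-empty, so again the equality defining strict upper cone preservation fails on the pair $a\neq b$.

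Finally, I would note the implication in the conclusion: since being upper cone preserving requires $f(U(x,y))=U(f(x),f(y))$ for all $x,y\in P$, including the case $x\neq y$, a mapping that fails to be strictly upper cone preserving automatically fails to be upper cone preserving.

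The only real obstacle is the possibility that $U(a,b)$ is empty, which is not covered by the preceding theorem; handling it with a constant mapping is the small extra ingredient the corollary needs beyond a direct quotation of the theorem.
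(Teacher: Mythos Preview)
Your argument is correct and follows the paper's intended route: the corollary is stated without proof, as an immediate consequence of the preceding theorem together with the fact that DCC forces every non-empty subset to have a minimal element. Your extra case $U(a,b)=\emptyset$, handled via the constant map $f_a$, is a careful addition that the paper leaves implicit; it is needed for completeness and your treatment of it is sound.
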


On the other hand, if a poset in question is a chain, we can give a necessary and sufficient condition for a monotone mapping to be upper cone preserving.

\begin{proposition}
Let $(C,\leq)$ be a chain and $f\colon C\rightarrow C$ monotone. Then $f$ is upper cone preserving if and only if $U(f(x))\subseteq f(C)$ for all $x\in C$.
\end{proposition}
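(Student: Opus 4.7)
The plan is to reduce the condition ``upper cone preserving'' to a statement about principal upper cones $U(z)$, exploiting the fact that $C$ is a chain. For any $x,y\in C$ we have (say, assuming $x\leq y$) $U(x,y)=U(y)$, and by monotonicity $f(x)\leq f(y)$ so $U(f(x),f(y))=U(f(y))$. Hence $f$ is upper cone preserving on $C$ if and only if $f(U(z))=U(f(z))$ for every $z\in C$. Moreover, by Lemma~\ref{lem2}(i), $f(U(z))\subseteq U(f(z))$ holds automatically, so the whole content is the reverse inclusion $U(f(z))\subseteq f(U(z))$.

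For the forward direction I would argue directly: if $f$ is upper cone preserving, then for every $x\in C$,
\[
U(f(x))=U(f(x),f(x))=f(U(x,x))=f(U(x))\subseteq f(C),
\]
which is the stated condition.

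For the converse, assume $U(f(x))\subseteq f(C)$ for all $x\in C$, and let $z\in C$ and $c\in U(f(z))$. By hypothesis $c=f(d)$ for some $d\in C$. Since $C$ is a chain, either $d\geq z$ or $d<z$. In the first case $d\in U(z)$, so $c=f(d)\in f(U(z))$. In the second case, monotonicity gives $f(d)\leq f(z)$, and combined with $c=f(d)\geq f(z)$ we obtain $c=f(z)\in f(U(z))$. Either way $c\in f(U(z))$, which yields $U(f(z))\subseteq f(U(z))$ and finishes the argument.

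There is no real obstacle here; the only subtle point is recognizing that the chain hypothesis is what lets us convert ``$d$ is not above $z$'' into ``$d<z$'' (and hence $f(d)\leq f(z)$), which pins down the stray preimage to lie in $f(U(z))$. Everything else is bookkeeping using Lemma~\ref{lem2}(i) and the trivial identity $U(x,y)=U(\max(x,y))$ in a chain.
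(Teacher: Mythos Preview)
Your proof is correct and follows essentially the same approach as the paper's: both reduce the question to the identity $f(U(z))=U(f(z))$ for principal cones via the chain hypothesis, obtain the forward direction from $U(f(x))=f(U(x,x))\subseteq f(C)$, and for the converse pick a preimage $d$ of $c\in U(f(z))$ and use comparability of $d$ with $z$. The only cosmetic differences are that you place the reduction to principal cones at the start rather than the end, and in the case $d<z$ you conclude $c=f(z)$ directly whereas the paper phrases the same step as a contradiction after first disposing of the case $c=f(z)$.
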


\begin{proof}
Let $a\in C$. If $f$ is upper cone preserving then
\[
U(f(a))=U(f(a),f(a))=f(U(a,a))\subseteq f(C).
\]
Conversely, assume $U(f(x))\subseteq f(C)$ for all $x\in C$. Since $f$ is monotone, we have $f(U(a))\subseteq U(f(a))$ according to Lemma~\ref{lem2} (i). Now let $b\in U(f(a))$. If $b=f(a)$ then $b\in f(U(a))$. Now assume $b>f(a)$. Since $b\in U(f(a))\subseteq f(C)$, there exists some $c\in C$ with $f(c)=b$. Now $c\leq a$ would imply $b=f(c)\leq f(a)$, a contradiction. Hence $c\in U(a)$ and therefore $b=f(c)\in f(U(a))$. This shows $U(f(a))\subseteq f(U(a))$ and hence $f(U(a))=U(f(a))$. Now, for $x,y\in C$ we have
\[
U(f(x),f(y))=\left\{
\begin{array}{ll}
U(f(y))=f(U(y))=f(U(x,y)) & \text{if }x\leq y, \\
U(f(x))=f(U(x))=f(U(x,y)) & \text{otherwise},
\end{array}
\right.
\]
i.e.\ $f$ is upper cone preserving.
\end{proof}

\section{Ordinal sums and equivalence relations}

We have seen that the Descending Chain Condition together with the property that every monotone mapping is strictly upper cone preserving forces a poset to be a chain. It seems that our conditions are too restrictive. In fact, if we replace monotone mappings by strictly monotone ones, we can obtain a richer structure of posets in which strictly monotone mappings are strongly upper cone preserving.

The {\em ordinal sum} of two posets $(A,\leq)$ and $(B,\leq)$ with $A\cap B=\emptyset$ is the poset with base set $A\cup B$ where the order inside $A$ and inside $B$ coincides with the original one and $A<B$, i.e.\ every element of $A$ is below every element of $B$. Now, we can state the following result.

\begin{proposition}
Every strictly monotone mapping on the ordinal sum of an antichain and a finite chain is strongly upper cone preserving.
\end{proposition}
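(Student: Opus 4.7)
The plan is to exploit the rigid chain structure of the ordinal sum $P$ of an antichain $A$ and a finite chain $C$: I will show that any strictly monotone $f\colon P\to P$ is forced to fix $C$ pointwise and to send $A$ into $A$, after which the cone identity becomes a routine case analysis.

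Write $C=\{c_1<c_2<\cdots<c_n\}$. If $A=\emptyset$ then $P=C$ is a finite chain, and a strictly monotone self-map of an $n$-element chain is necessarily the identity (its image is an $n$-element strictly increasing sequence inside $C$), so the statement is immediate. Assume from now on that $A\neq\emptyset$.

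The central step is the following structural claim: $f(c_i)=c_i$ for every $i$, and $f(A)\subseteq A$. To prove it, I fix any $a\in A$ and apply strict monotonicity to $a<c_1<c_2<\cdots<c_n$, producing a chain $f(a)<f(c_1)<\cdots<f(c_n)$ of $n+1$ elements in $P$. The longest chains in $P$ contain exactly $n+1$ elements and have the form $\{a',c_1,\ldots,c_n\}$ with $a'\in A$, so this forces $f(a)\in A$ and $\{f(c_1),\ldots,f(c_n)\}=\{c_1,\ldots,c_n\}$ as sets; since the $f(c_i)$ are already listed in strictly increasing order, $f(c_i)=c_i$.

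With the structural claim in hand, I verify $f(U(x,y))=U(f(x),f(y))$ for every pair $x,y\in P$ with $f(x)\neq f(y)$ in three subcases. If $x,y\in A$ are distinct with $f(x)\neq f(y)$, then $x\parallel y$ gives $U(x,y)=C$; the images $f(x),f(y)$ are distinct elements of the antichain $A$, so likewise $U(f(x),f(y))=C$; and $f(C)=C$ by the structural claim. In every remaining case at least one of $x,y$ lies in $C$ and the pair is comparable, so letting $m$ denote the larger element (which lies in $C$), both $U(x,y)$ and $U(f(x),f(y))$ reduce to $U(m)\subseteq C$, on which $f$ is the identity, yielding $f(U(m))=U(m)$. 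The only genuine obstacle is the structural claim—the chain-length argument that pins $f$ down on $C$; once that is secured, the subsequent case analysis is mechanical.
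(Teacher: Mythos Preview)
Your proof is correct and follows essentially the same approach as the paper: establish the structural claim that $f|_C=\id_C$ and $f(A)\subseteq A$, then do the three-way case analysis on the location of $x,y$. The paper's proof simply asserts the structural claim without argument, whereas you supply a clean justification via the maximal-chain-length count; your version is therefore more complete than the paper's on the only nontrivial point.
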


\begin{proof}
If $f$ is a strictly monotone mapping on the ordinal sum $(P,\leq)$ of an antichain $(A,\leq)$ and a finite chain $(C,\leq)$, $a,b\in P$ and $f(a)\neq f(b)$ then $f(A)\subseteq A$, $f(x)=x$ for all $x\in C$ and
\[
f(U(a,b))=\left\{
\begin{array}{ll}
f(C)=C=U(f(a),f(b))                 & \text{if }a,b\in A, \\
f(U(b))=U(b)=U(f(a),b)=U(f(a),f(b)) & \text{if }a\in A\text{ and }b\in C, \\
U(a,b)=U(f(a),f(b))                 & \text{if }a,b\in C.
\end{array}
\right.
\]
\end{proof}

\begin{example}
Examples of such ordinal sums are visualized in the Figure~2:

\vspace*{1mm}

\begin{center}
\setlength{\unitlength}{7mm}
\begin{picture}(16,8)
\put(0,0){\circle*{.3}}
\put(2,0){\circle*{.3}}
\put(5,0){\circle*{.3}}
\put(7,0){\circle*{.3}}
\put(9.6,0){\ldots}
\put(11,0){\circle*{.3}}
\put(11.6,0){\ldots}
\put(13,0){\circle*{.3}}
\put(13.6,0){\ldots}
\put(15,0){\circle*{.3}}
\put(15.6,0){\ldots}
\put(1,2){\circle*{.3}}
\put(6,2){\circle*{.3}}
\put(6,4){\circle*{.3}}
\put(13,2){\circle*{.3}}
\put(13,4){\circle*{.3}}
\put(13,6){\circle*{.3}}
\put(13,8){\circle*{.3}}
\put(1,2){\line(-1,-2)1}
\put(1,2){\line(1,-2)1}
\put(6,2){\line(-1,-2)1}
\put(6,2){\line(1,-2)1}
\put(6,2){\line(0,1)2}
\put(13,2){\line(-1,-1)2}
\put(13,2){\line(1,-1)2}
\put(13,0){\line(0,1)8}
\put(.6,-1){{\rm(a)}}
\put(5.6,-1){{\rm(b)}}
\put(12.6,-1){{\rm(c)}}
\put(7.3,-2){{\rm Fig.\ 2}}
\end{picture}
\end{center}

\vspace*{8mm}

\end{example}

Every mapping $f\colon A\rightarrow B$ induces an equivalence relation $\Theta$ on $A$ by defining $(x,y)\in\Theta$ if $f(x)=f(y)$. This equivalence relation is called the {\em kernel} of $f$, usually denoted by $\ker f$. The question when for a given poset $(P,\leq)$ and a given mapping $F\colon P\rightarrow P$ the quotient set $P/(\ker f)$ is again a poset is answered in the next theorem.

Let $(P,\leq)$ and $(Q,\leq)$ be posets and $f\colon P\rightarrow Q$. Recall that $f$ is called {\em strongly monotone} if it is monotone and $a,b\in P$ and $f(a)\leq f(b)$ imply that there exist $a',b'\in P$ with $f(a')=f(a)$, $f(b')=f(b)$ and $a'\leq b'$.

\begin{definition}\label{def1}
Le $\mathbf P=(P,\leq)$ be a poset. An equivalence relation $\Theta$ on $P$ is called an {\em $S$-equivalence} on $\mathbf P$ if it satisfies the following two conditions for all $a,a',b,b'\in P$:
\begin{enumerate}[{\rm(i)}]
\item If $a,b,b',c\in P$, $a\leq b$, $b'\leq c$ and $(b,b')\in\Theta$ then there exist $a'\in[a]\Theta$ and $c'\in[c]\Theta$ with $a'\leq c'$,
\item if $a,a',b,b'\in P$, $a\leq b$, $b'\leq a'$ and $(a,a'),(b,b')\in\Theta$ then $(a,b)\in\Theta$.
\end{enumerate}
\end{definition}

\begin{theorem}
Let $\mathbf P=(P,\leq)$ be a poset, $f\colon P\rightarrow P$ strongly monotone and $\Theta$ an $S$-equivalence on $\mathbf P$ and define $[a]\Theta\leq[b]\Theta$ if there exist $a'\in[a]\Theta$ and $b'\in[b]\Theta$ with $a'\leq b'$. Then
\begin{enumerate}[{\rm(i)}]
\item $\ker f$ is an $S$-equivalence on $\mathbf P$,
\item $(P/\Theta,\leq)$ is a poset and $x\mapsto[x]\Theta$ strongly monotone.
\end{enumerate}
\end{theorem}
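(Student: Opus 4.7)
My plan is to prove the two parts in order; each is essentially a direct unpacking of the definitions together with the hypotheses on $f$ and $\Theta$.

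For (i), I would verify that $\ker f$ satisfies the two clauses of Definition~\ref{def1}. For clause (i), given $a\leq b$, $b'\leq c$ and $f(b)=f(b')$, monotonicity of $f$ yields $f(a)\leq f(b)=f(b')\leq f(c)$; strong monotonicity then produces $a',c'\in P$ with $f(a')=f(a)$, $f(c')=f(c)$ and $a'\leq c'$, which are precisely the required witnesses. For clause (ii), the hypotheses $a\leq b$, $b'\leq a'$, $f(a)=f(a')$ and $f(b)=f(b')$ give, by monotonicity alone, $f(a)\leq f(b)=f(b')\leq f(a')=f(a)$, forcing $f(a)=f(b)$ and hence $(a,b)\in\ker f$.

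For (ii) I would first confirm that the relation on $P/\Theta$ is a partial order. Reflexivity is immediate: take $a'=b'=a$. For transitivity, assume $[a]\Theta\leq[b]\Theta$ and $[b]\Theta\leq[c]\Theta$, witnessed by some $a_1\leq b_1$ and $b_2\leq c_1$ with $a_1\in[a]\Theta$, $b_1,b_2\in[b]\Theta$ and $c_1\in[c]\Theta$; clause (i) of Definition~\ref{def1} applied to $(a_1,b_1,b_2,c_1)$ (noting $(b_1,b_2)\in\Theta$) supplies $a'\in[a]\Theta$ and $c'\in[c]\Theta$ with $a'\leq c'$. For antisymmetry, $[a]\Theta\leq[b]\Theta$ and $[b]\Theta\leq[a]\Theta$ are witnessed by some $a_1\leq b_1$ and $b_2\leq a_2$ with $a_1,a_2\in[a]\Theta$ and $b_1,b_2\in[b]\Theta$; this is exactly the hypothesis of clause (ii) with variables $(a,a',b,b')=(a_1,a_2,b_1,b_2)$, yielding $(a_1,b_1)\in\Theta$ and hence $[a]\Theta=[b]\Theta$.

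Finally, for the quotient map $\pi\colon x\mapsto[x]\Theta$, monotonicity is built into the definition of $\leq$ on $P/\Theta$ (take $a'=a$, $b'=b$), and strong monotonicity is essentially immediate by unpacking $\pi(a)\leq\pi(b)$: the very witnesses $a'\in[a]\Theta$, $b'\in[b]\Theta$ with $a'\leq b'$ satisfy $\pi(a')=\pi(a)$ and $\pi(b')=\pi(b)$. The only step that requires any real care is antisymmetry, since one must align the four witnesses with the four variables in clause (ii) of Definition~\ref{def1} in the correct orientation: the two given inequalities point in opposite directions and the two $\Theta$-relations couple $a$-witnesses with $a$-witnesses (and likewise for $b$). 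Beyond this bookkeeping I do not anticipate any genuine obstacle; the $S$-equivalence axioms were designed exactly to make transitivity and antisymmetry go through.
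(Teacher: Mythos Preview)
Your proposal is correct and follows essentially the same route as the paper's own proof: part (i) is verified by chaining $f(a)\leq f(b)=f(b')\leq f(c)$ and invoking strong monotonicity for clause (i), and by squeezing $f(a)\leq f(b)\leq f(a)$ for clause (ii); part (ii) uses clause (i) of the $S$-equivalence definition for transitivity, clause (ii) for antisymmetry, and the definition of $\leq$ on $P/\Theta$ directly for (strong) monotonicity of the quotient map. Apart from the inconsequential order in which you treat antisymmetry and transitivity, your argument matches the paper's almost line for line.
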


\begin{proof}
\
\begin{enumerate}[(i)]
\item Put $\Phi:=\ker f$ and assume $a,b,b',c\in P$, $a\leq b$, $b'\leq c$ and $(b,b')\in\Phi$. Then
\[
f(a)\leq f(b)=f(b')\leq f(c).
\]
Since $f$ is strongly monotone there exist $a',c'\in P$ with $f(a')=f(a)$, $f(c')=f(c)$ and $a'\leq c'$. Hence $a'\in[a]\Phi$, $c'\in[c]\Phi$ and $a'\leq c'$ proving (i) of Definition~\ref{def1}. Next assume $a,a',b,b'\in P$, $a\leq b$, $b'\leq a'$ and $(a,a'),(b,b')\in\Phi$. Then
\[
f(a)\leq f(b)=f(b')\leq f(a')=f(a)
\]
and hence $f(a)=f(b)$, i.e.\ $(a,b)\in\Phi$ proving (ii) of Definition~\ref{def1}.
\item We consider the binary relation $\leq$ on $P/\Theta$. Obviously, $\leq$ is reflexive. Assume $a,b\in P$, $[a]\Theta\leq[b]\Theta$ and $[b]\Theta\leq[a]\Theta$. Then there exist $a',a''\in[a]\Theta$ and $b',b''\in[b]\Theta$ with $a'\leq b'$ and $b''\leq a''$. Since $(a',a''),(b',b'')\in\Theta$, we conclude by (ii) of Definition~\ref{def1} that $(a',b')\in\Theta$. This shows $[a]\Theta=[a']\Theta=[b']\Theta=[b]\Theta$ proving antisymmetry of $\leq$. Now assume $a,b,c\in P$, $[a]\Theta\leq[b]\Theta$ and $[b]\Theta\leq[c]\Theta$. Then there exist $a'\in[a]\Theta$, $b',b''\in[b]\Theta$ and $c'\in[c]\Theta$ with $a'\leq b'$ and $b''\leq c'$. Since $(b',b'')\in\Theta$, we conclude by (i) of Definition~\ref{def1} that there exist $a''\in[a']\Theta$ and $c''\in[c']\Theta$ with $a''\leq c''$. Now $a''\in[a]\Theta$ and $c''\in[c]\Theta$ which shows $[a]\Theta\leq[c]\Theta$ proving transitivity of $\leq$. Altogether, $(P/\Theta,\leq)$ is a poset. Clearly, $x\mapsto[x]\Theta$ is monotone and by the definition of $\leq$ on $P/\Theta$, this mapping is strongly monotone.
\end{enumerate}
\end{proof}

\begin{example}
Consider the poset $\mathbf P=(P,\leq)$ visualized in Figure~3:

\vspace*{-2mm}

\begin{center}
\setlength{\unitlength}{7mm}
\begin{picture}(4,8)
\put(1,3){\circle*{.3}}
\put(3,3){\circle*{.3}}
\put(1,5){\circle*{.3}}
\put(3,5){\circle*{.3}}
\put(2,7){\circle*{.3}}
\put(2,1){\circle*{.3}}
\put(1,3){\line(0,1)2}
\put(1,3){\line(1,1)2}
\put(3,3){\line(-1,1)2}
\put(3,3){\line(0,1)2}
\put(2,7){\line(-1,-2)1}
\put(2,7){\line(1,-2)1}
\put(2,1){\line(-1,2)1}
\put(2,1){\line(1,2)1}
\put(.3,2.85){$a$}
\put(3.4,2.85){$b$}
\put(.3,4.85){$c$}
\put(3.4,4.85){$d$}
\put(1.85,7.4){$1$}
\put(1.85,.25){$0$}
\put(1.2,-.75){{\rm Fig.\ 3}}
\end{picture}
\end{center}

\vspace*{4mm}

Let $f\colon P\rightarrow P$ be defined by
\[
\begin{array}{c|cccccc}
  x  & 0 & a & b & c & d & 1 \\
\hline
f(x) & 0 & a & a & c & c & 1
\end{array}
\]
and put $\Theta:=\ker f$. Then $f$ is strongly monotone, $\Theta=\{0\}^2\cup\{a,b\}^2\cup\{c,d\}^2\cup\{1\}^2$ is an $S$-equivalence on $\mathbf P$ and $(P/\Theta,\leq)=(\{[0]\Theta,[a]\Theta,[c]\Theta,[1]\Theta\},\leq)$ is again a poset where $[0]\Theta<[a]\Theta<[c]\Theta<[1]\Theta$.
\end{example}

Authors' addresses:

Ivan Chajda \\
Palack\'y University Olomouc \\
Faculty of Science \\
Department of Algebra and Geometry \\
17.\ listopadu 12 \\
771 46 Olomouc \\
Czech Republic \\
ivan.chajda@upol.cz

Helmut L\"anger \\
TU Wien \\
Faculty of Mathematics and Geoinformation \\
Institute of Discrete Mathematics and Geometry \\
Wiedner Hauptstra\ss e 8-10 \\
1040 Vienna \\
Austria, and \\
Palack\'y University Olomouc \\
Faculty of Science \\
Department of Algebra and Geometry \\
17.\ listopadu 12 \\
771 46 Olomouc \\
Czech Republic \\
helmut.laenger@tuwien.ac.at

\end{document}